\title{Locally finite groups of finite centralizer dimension}
\date{21 April 2018; reworked 07 September 2018}
\author{Alexandre Borovik}\thanks{This is the Author Accepted Manuscript of the paper: A. Borovik and U. Karhum\"{a}ki, Locally finite groups of finite centralizer dimension. J. Group Theory (2019). Accepted for publication 22 January 2019. The manuscript will undergo copyediting, typesetting, and review of
the resulting proof before it is published in its final form.}
\address{School of Mathematics, University of Manchester, UK; alexandre $\gg {\rm at} \ll$ borovik.net}
\author{Ulla Karhum\"{a}ki}
\thanks{The second author is funded by the Vilho, Yrj\"{o} and Kalle V\"{a}is\"{a}l\"{a} Foundation of the Finnish Academy of Science and Letters.}
\address{School of Mathematics, University of Manchester, UK;
ukarhumaki $\gg {\rm at} \ll$ gmail.com}
\subjclass{20F50}
\newtheorem{lemma}{Lemma}
\newtheorem{theorem}{Theorem}
\newtheorem{proposition}{Proposition}
\newtheorem{fact}{Fact}
\begin{document}

\maketitle
\begin{abstract}
We describe structure of locally finite groups of finite centralizer dimension.
\end{abstract}

\section{Introduction}

Recall that the centralizer dimension of a group is the maximum length of a chain of nested
centralizers. This paper describes structure of locally finite groups of finite centralizer dimension; the result will be used by the second author in \cite{Karhumaki2017B}.

\begin{theorem}\label{th:structure-lfg} Let $G$ be a locally finite group of finite centralizer dimension $c$. Then

 $G$ has a  normal series
\[
1 \unlhd S \unlhd L  \unlhd  G,
\]
where

\begin{itemize}

\item[{\rm (a)}] $S$ is solvable of derived length bounded by a function of $c$.

\item[{\rm (b)}] $\overline{L} =L/S$ is a direct product $\overline{L} = \overline{L}_1 \times \cdots \times \overline{L}_m$ of finitely many non-abelian simple groups.

\item[{\rm (c)}] Each $\overline{L}_i$ is either finite, or a Chevalley group, or a twisted analogue of a Chevalley group,  or one of the non-algebraic twisted groups of Lie type $^2B_2$, $^2F_4$, or $^2G_2$, over a locally finite field.
\item[{\rm (d)}]  The factor group $G/L$ is finite.

\end{itemize}
\end{theorem}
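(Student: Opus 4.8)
The plan is to strip off a soluble radical and then analyse the resulting semisimple quotient, using the classification of infinite simple locally finite groups together with the centralizer bound; the whole argument runs by induction on $c$.

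\emph{Inheritance and the soluble radical.} First I would record that $c$-dimension is inherited by subgroups and by quotients, so every section of $G$ again has $c$-dimension at most $c$, and in particular satisfies the minimal condition on centralizers with the uniform bound $c$. Next, let $S$ be the locally soluble radical of $G$ (the unique maximal locally soluble normal subgroup; it exists because a product of locally soluble normal subgroups is locally soluble and local solubility is preserved under extensions). The first main step, (a), is to show $S$ is soluble of derived length bounded by a function of $c$; for this I would invoke (or prove) that a locally finite, locally soluble group of $c$-dimension at most $c$ is soluble of derived length bounded in terms of $c$, reducing via a directed-union argument to the statement that finite soluble groups of bounded $c$-dimension have bounded derived length (which in turn rests on bounds, in terms of $c$, for the Fitting height and for the nilpotency classes of the Fitting factors, using $C(F(G/N)) \le F(G/N)$ in finite soluble quotients).

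\emph{The socle of $\overline{G} = G/S$.} Passing to $\overline{G} := G/S$: the preimage in $G$ of a locally soluble normal subgroup of $\overline{G}$ would be locally soluble and normal in $G$, hence contained in $S$, so $\overline{G}$ has trivial locally soluble radical while still having $c$-dimension at most $c$. Every minimal normal subgroup of $\overline{G}$ is therefore non-abelian, and by the structure theory of characteristically simple locally finite groups it is a direct product of isomorphic non-abelian (finite or infinite) simple locally finite groups. Finiteness of the $c$-dimension forces both the number of simple direct factors in such a subgroup and the number of minimal normal subgroups of $\overline{G}$ to be finite: infinitely many would produce, from a chain $M_1 < M_1M_2 < M_1M_2M_3 < \cdots$, a strictly descending chain $C_{\overline{G}}(M_1) > C_{\overline{G}}(M_1M_2) > \cdots$ of centralizers, strict because each further non-abelian factor does not centralise itself. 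Hence $\overline{L} := \mathrm{Soc}(\overline{G}) = \overline{L}_1 \times \cdots \times \overline{L}_m$ is a direct product of finitely many non-abelian simple groups, giving (b); and for (c), each $\overline{L}_i$ is a non-abelian simple locally finite group, so if infinite it is, by the classification of infinite simple locally finite groups (Belyaev, Borovik, Hartley--Shute, Thomas, via CFSG), a Chevalley group, a twisted Chevalley group, or one of ${}^{2}B_2$, ${}^{2}F_4$, ${}^{2}G_2$ over an infinite locally finite field, which is exactly the list in (c).

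\emph{From the socle to finite index.} Let $L$ be the preimage of $\overline{L}$ in $G$; it is characteristic, $S \unlhd L \unlhd G$, and $L/S = \overline{L}$. It remains to prove (d), i.e.\ that $\overline{G}/\overline{L}$ is finite. I would first show $C := C_{\overline{G}}(\overline{L}) = 1$: otherwise $C \unlhd \overline{G}$ with $C \cap \overline{L} = 1$ and $[C,\overline{L}] = 1$, so $\overline{L} \times C \le \overline{G}$ forces $C$ to have $c$-dimension strictly less than $c$, while $C$ has trivial locally soluble radical (its radical is characteristic in $C$, hence normal in $\overline{G}$); by the inductive hypothesis $C$ then has non-trivial socle, so $C$ contains a non-abelian simple subgroup $T$ subnormal in $\overline{G}$, whose distinct $\overline{G}$-conjugates commute and intersect trivially, hence are finite in number by finiteness of the $c$-dimension, and whose product is a minimal normal subgroup of $\overline{G}$ inside $C$ --- contradicting $C \cap \overline{L} = 1$. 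So $\overline{G}$ embeds in $\mathrm{Aut}(\overline{L})$, which permutes the (uniquely determined) simple direct factors and acts on each isomorphism type through a wreath product with a finite symmetric group; it therefore suffices to show that $\overline{G}$ induces on each $\overline{L}_i$ an $\overline{L}_i$-by-finite automorphism group. This is clear when $\overline{L}_i$ is finite. When $\overline{L}_i$ is of Lie type over an infinite locally finite field $K_i$, the outer automorphism group is (finite diagonal) $\cdot$ (field automorphisms, governed by $\mathrm{Aut}(K_i)$) $\cdot$ (finite graph), so one must show $\overline{G}$ induces only finitely many field automorphisms on $\overline{L}_i$ --- and this is where finiteness of the $c$-dimension is used decisively: an infinite group of induced field automorphisms yields an infinite strictly descending chain of fixed subfields $K_i \supsetneq K_i^{(1)} \supsetneq K_i^{(2)} \supsetneq \cdots$, and the associated fixed-point subgroups of $\overline{L}_i$ (groups of the same Lie type over the successive subfields, up to the bounded distortion caused by composition with an inner automorphism) give an infinite strictly descending chain of centralizers inside $\overline{G}$, which is impossible. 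Hence $\overline{G}/\overline{L}$ is finite, so $G/L$ is finite, which is (d), and the normal series $1 \unlhd S \unlhd L \unlhd G$ has all the required properties.

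\emph{Main obstacle.} The genuinely hard part is the last step: controlling the outer automorphisms of the simple factors of the socle, and in particular turning ``finitely many induced field automorphisms'' into the centralizer bound, which requires a careful analysis of centralizers of field automorphisms and their inner-twisted versions in groups of Lie type over locally finite fields. The bound on the derived length of $S$ and the input from the classification of simple locally finite groups are substantial but can largely be quoted, while the finiteness of the socle and the triviality of $C_{\overline{G}}(\overline{L})$ are comparatively soft consequences of finite $c$-dimension.
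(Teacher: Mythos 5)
Your proposal rests on a premise that is false and that the paper itself flags as the central obstacle: you write that ``$c$-dimension is inherited by subgroups and by quotients, so every section of $G$ again has $c$-dimension at most $c$.'' The inheritance by subgroups is true, but the inheritance by quotients (and hence by sections) fails in general --- the paper points to an example of Bryant \cite[Section 4]{BRYANT1979371} of a periodic nilpotent group of finite centralizer dimension with a quotient of infinite centralizer dimension. Because of this, every place where you argue with a descending chain of centralizers \emph{inside $\overline{G}=G/S$} is unjustified: this includes your argument that the number of minimal normal subgroups and of simple direct factors of the socle is finite (chains $C_{\overline{G}}(M_1)>C_{\overline{G}}(M_1M_2)>\cdots$), your argument that $C_{\overline{G}}(\overline{L})=1$ ``by $c$-dimension $<c$,'' and, most importantly, the entire last step where you derive a contradiction from an infinite descending chain of centralizers $C_{\overline{L}_i}(\ldots)$ of fixed-point subgroups of field automorphisms inside $\overline{G}$.

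The paper works around this in three distinct ways, none of which appear in your sketch. First, to control solvable sections it proves that sections are ``constrained'' (bounded derived length of solvable subgroups) via a Frattini-type reduction back into $G$, where Khukhro's theorem applies. Second, to bound the number of non-solvable composition factors it does not try to count minimal normals in $\overline{G}$ directly, but invokes the theorem of Buturlakin--Vasil'ev, which bounds the number of non-solvable factors in any finite subnormal series of the \emph{original} group $G$ by $5c$. Third --- and this is the real content of (d) --- the chain of centralizers of field automorphisms in $\overline{K}$ cannot simply be read off as a contradiction; the paper trims $S$ using the Frattini argument (and Bryant--Hartley conjugacy of Sylow $\pi$-subgroups), replaces $G$ by a carefully chosen subgroup $KA$, controls coprimality of orders using Hartley--Turull's theorem on coprime actions, and then \emph{lifts} the centralizer chain from $\overline{K}$ to $K\le G$ via a coprime-action centralizer-lifting lemma, so the contradiction with $\dim_c(G)\le c$ finally takes place inside $G$ itself. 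Your sketch correctly identifies the phenomenon to exploit (infinite field automorphism groups give long centralizer chains), and the final diagnostic paragraph correctly singles out the hard step, but the mechanism you propose for converting that phenomenon into a contradiction is exactly the one that is unavailable without first doing the Frattini/lifting surgery.
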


Our proof uses the Classification of Finite Simple Groups.

\section{Proof of Theorem \ref{th:structure-lfg}}

Our theorem absorbs a number of known results, and we shall handle the proof issue by issue.

We work with the group $G$ which satisfies the assumptions of Theorem \ref{th:structure-lfg}.

\subsection{Control of sections}

If $G$ is a group and $H \leq G$ and $K\lhd H$, then the factor group $H/K$ is called a \emph{section} of $G$.

One immediate difficulty encountered in any study of groups of finite centralizer dimension is that the descending chain condition for centralizers is inherited by subgroups of $G$, but not by factor groups or sections. This happens even in the class of periodic nilpotent groups of finite centralizer dimension (and they are of course locally finite), an example can be found in \cite[Section 4]{BRYANT1979371}.

So we need a sufficiently strong property which holds in every locally finite group of finite centralizer dimension and is inherited by its sections. Luckily, this property is provided by the following result.

\begin{fact}[Khukhro \cite{khukhro_2009}]\label{khukhro} Periodic locally solvable groups of finite centralizer dimension are solvable and have derived length bounded by function of centralizer dimension.
\end{fact}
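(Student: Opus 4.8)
The plan is to reduce the claim to a uniform bound on finite solvable groups and then to establish that bound. Since $G$ is periodic and locally solvable, every finitely generated subgroup of $G$ is torsion and solvable, hence finite (a finitely generated torsion solvable group is finite, by induction on derived length, using that a subgroup of finite index in a finitely generated group is again finitely generated); so $G$ is locally finite and each of its finitely generated subgroups is a finite solvable group. Next, finite centralizer dimension passes to subgroups: if $H \le G$ and $C_H(X_1) \subsetneq \cdots \subsetneq C_H(X_n)$ is a chain of centralizers of $H$, then, writing $D = C_H(X)$, one has $H \cap C_G(C_G(D)) = D$ (because $X$ and $D$ centralize each other, so $X \le C_G(D)$), and therefore $C_G(C_G(C_H(X_1))) \subsetneq \cdots \subsetneq C_G(C_G(C_H(X_n)))$ is a chain of centralizers of $G$ of the same length; thus every finitely generated subgroup of $G$ has centralizer dimension at most $c$. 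Finally, the $n$-th derived subgroup commutes with directed unions, so once the finite case is settled and every finitely generated subgroup of $G$ has derived length at most $f(c)$, we get $G^{(f(c))} = 1$. Hence it suffices to prove: \emph{there is a function $f$ such that every finite solvable group of centralizer dimension at most $c$ has derived length at most $f(c)$.}

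To handle the finite statement I would isolate two uniform bounds. \emph{(A) A finite $p$-group of centralizer dimension at most $c$ has nilpotency class at most some $g_2(c)$.} This is classical in spirit: a large nilpotency class forces a long chain of nested centralizers --- for instance the centralizers of the successive terms of the lower central series, or of suitably chosen elements and their iterated commutators, can be arranged into a strictly increasing chain whose length grows with the class. \emph{(B) A finite solvable group of centralizer dimension at most $c$ has Fitting height at most some $g_1(c)$.} Here too the point is that a long Fitting series must reveal itself as a long chain of centralizers; one produces a subgroup witnessing this --- for example a tower $P_1 P_2 \cdots P_k$ built from Sylow subgroups of successive members of the Fitting series, or a suitably refined chain of the iterated centralizers $C_G(F_1) \ge C_G(F_2) \ge \cdots$.

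Granting (A) and (B), the finite statement follows quickly. Let $G$ be finite solvable with centralizer dimension at most $c$ and let $1 = F_0 \unlhd F_1 \unlhd \cdots \unlhd F_h = G$ be its Fitting series, so $h \le g_1(c)$. For each $i$ the factor $F_{i+1}/F_i$ is nilpotent, hence the direct product of its Sylow subgroups, and a Sylow $p$-subgroup of $F_{i+1}/F_i$ is the image of a Sylow $p$-subgroup $P$ of $F_{i+1}$. Since $P$ is a subgroup of $G$ it has centralizer dimension at most $c$, so by (A) the class of $P$, hence of the $p$-part of $F_{i+1}/F_i$, is at most $g_2(c)$; thus $\mathrm{dl}(F_{i+1}/F_i) \le \lceil \log_2(g_2(c)+1)\rceil + 1 =: g_3(c)$. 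A normal series with $h$ factors each of derived length at most $g_3(c)$ gives $\mathrm{dl}(G) \le h\, g_3(c) \le g_1(c)\, g_3(c) =: f(c)$, and in particular $G$ is solvable of bounded derived length.

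The main obstacle is (B) --- and, behind it, exactly the phenomenon stressed just before the statement: centralizer dimension is \emph{not} inherited by sections, so one cannot bound the Fitting height by passing to $G/F(G)$, nor control the classes of the Fitting factors by treating them as sections. Every argument has to be carried out with genuine subgroups of $G$ (which is why, above, one realizes the $p$-parts of the Fitting factors as images of Sylow subgroups of the $F_{i+1}$, and why in (B) one must extract a subgroup-tower rather than argue on quotients). The technical crux is thus the combinatorial lemma converting a long Fitting series --- or, for (A), a large nilpotency class --- into an explicit long chain of nested centralizers \emph{inside a subgroup}; (A) is comparatively routine, whereas (B) requires care both in the choice of the tower and in checking that the associated centralizers are pairwise distinct.
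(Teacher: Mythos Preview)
The paper does not prove this statement: it is recorded as a Fact and attributed to Khukhro \cite{khukhro_2009}, with no argument supplied. So there is no in-paper proof to compare against; your proposal is an attempt to reconstruct the content of the cited reference.

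Your reduction to the finite case is correct and standard. The genuine gap is claim (A): it is false. The dihedral $2$-groups $D_{2^n}=\langle r,s\mid r^{2^{n-1}}=s^2=1,\ srs^{-1}=r^{-1}\rangle$ have nilpotency class $n-1$, which is unbounded, yet their centralizer dimension equals $2$ for every $n\ge 3$. Indeed, the centralizer of any non-central rotation is $\langle r\rangle$, the centralizer of any reflection $s'$ is the Klein four-group $\langle s', r^{2^{n-2}}\rangle$, and every proper intersection of these collapses to $Z(G)=\langle r^{2^{n-2}}\rangle$; so the lattice of centralizers has height $2$ regardless of $n$. Your heuristic justification for (A)---taking centralizers $C_G(\gamma_i)$ of the lower-central-series terms---fails on exactly this example: for $n\ge 4$ one gets $C_G(\gamma_1)=Z(G)$, then $C_G(\gamma_i)=\langle r\rangle$ for $2\le i\le n-2$, and finally $G$, so only two strict inclusions however large the class.

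What your argument actually needs in place of (A) is the weaker assertion that the \emph{derived length} of a finite $p$-group is bounded in terms of its centralizer dimension (note that $D_{2^n}$ is metabelian, so no contradiction arises). That statement, together with a form of your (B), is the real content of Khukhro's paper, and neither step is ``comparatively routine'': the $p$-group bound does not follow from a soft class argument, and both bounds require substantive commutator and coprime-action calculations of the kind hinted at later in the present paper (see the use of \cite[Lemma~3]{khukhro_2009} in the proof of Fact~6). The overall architecture you propose---reduce to finite, bound Fitting height, control Sylow subgroups and hence the Fitting factors, assemble---has the right shape; the error is that (A) as stated is wrong, and the correct replacement is precisely the hard part of the reference you are trying to reprove.
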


Let us call a group $G$ \emph{constrained} if derived lengths of its solvable subgroups are bounded.  In view  of Fact \ref{khukhro},  periodic groups of finite centralizer dimension are constrained.

\begin{lemma}\label{lm:constrained} Let $G$ be a locally finite group of finite centralizer dimension and $\overline{H}=H/K$ its section. Then $\overline{H}$ is constrained.
\end{lemma}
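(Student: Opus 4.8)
The plan is to bound the derived length of an arbitrary solvable subgroup $\overline{A}\leq\overline{H}$ by $f(c)$, where $f$ is the function supplied by Fact~\ref{khukhro} (so every periodic locally solvable group of centralizer dimension $\leq c$ has derived length $\leq f(c)$). First I would pass from $\overline{A}$ to a finite subgroup of $G$. If the derived length of $\overline{A}$ is $\geq d$ then $\overline{A}^{(d-1)}\neq 1$; writing a nontrivial element of $\overline{A}^{(d-1)}$ as a product of $(d-1)$-fold iterated commutators of elements of $\overline{A}$, the subgroup generated by the finitely many elements occurring in these commutators is finitely generated, solvable, and of derived length $\geq d$, and since $\overline{H}$ is locally finite it is a \emph{finite} solvable group $\overline{B}$. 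Lifting a finite generating set of $\overline{B}\leq\overline{H}=H/K$ to $H$, local finiteness makes the subgroup $F\leq H$ it generates finite, and $FK/K=\overline{B}$, so $F/(F\cap K)\cong\overline{B}$ is solvable of derived length $\geq d$. Hence $F^{(\infty)}\leq F\cap K$ (the solvable residual, i.e.\ the stable term of the derived series, is the smallest normal subgroup with solvable quotient), so $d\leq\mathrm{dl}\bigl(F/F^{(\infty)}\bigr)$; also $F\leq G$ has centralizer dimension $\leq c$. So it suffices to bound, in terms of $c$, the derived length of the maximal solvable quotient $F/F^{(\infty)}$ of a finite subgroup $F$ of $G$.

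The key step is then a statement about finite groups alone, which I would prove by induction on $|F|$: \emph{every finite group $F$ contains a solvable subgroup $Q$ with $\mathrm{dl}(Q)\geq\mathrm{dl}\bigl(F/F^{(\infty)}\bigr)$.} If $F$ is solvable, take $Q=F$. Otherwise $R:=F^{(\infty)}\neq 1$; choose a prime $p\mid|R|$ and $P\in\mathrm{Syl}_p(R)$. The Frattini argument gives $F=R\,N_F(P)$, so $F/R\cong N_F(P)/N_R(P)$ is a solvable quotient of $N_F(P)$ of derived length $d:=\mathrm{dl}\bigl(F/F^{(\infty)}\bigr)$; since any solvable quotient of a group $X$ is a quotient of $X/X^{(\infty)}$, the maximal solvable quotient of $N_F(P)$ has derived length $\geq d$. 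If $N_F(P)<F$, induction applied to $N_F(P)$ yields a solvable subgroup of derived length $\geq d$, and it lies in $F$. If $N_F(P)=F$, i.e.\ $P\lhd F$, pass to $\overline{F}:=F/P$: here $|\overline{F}|<|F|$, and since $P\leq R$ we get $\overline{F}^{(\infty)}=R/P$, so the maximal solvable quotient of $\overline{F}$ is again $F/R$; a solvable subgroup $Q/P\leq\overline{F}$ of derived length $\geq d$ then pulls back to a subgroup $Q\leq F$ which is an extension of the $p$-group $P$ by a solvable group, hence solvable, with $\mathrm{dl}(Q)\geq d$.

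Granting this finite-group statement, let $Q$ be the solvable subgroup it produces for our finite $F$. Then $Q\leq F\leq G$, so $Q$ has centralizer dimension $\leq c$, and $Q$, being finite solvable, is periodic locally solvable, so Fact~\ref{khukhro} gives $\mathrm{dl}(Q)\leq f(c)$. Therefore $d\leq\mathrm{dl}\bigl(F/F^{(\infty)}\bigr)\leq\mathrm{dl}(Q)\leq f(c)$. As $d$ was an arbitrary lower bound for the derived length of an arbitrary solvable $\overline{A}\leq\overline{H}$, we conclude that every such $\overline{A}$ has derived length $\leq f(c)$, i.e.\ $\overline{H}$ is constrained.

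The main obstacle is the finite-group statement of the second paragraph: finiteness of centralizer dimension is inherited by subgroups but not by quotients, so Fact~\ref{khukhro} cannot be applied directly to $F/F^{(\infty)}$, and the point of the argument is that the derived length of this quotient is nevertheless realised by a genuine solvable \emph{subgroup}. The delicate case is $N_F(P)=F$, which forces the descent to $F/P$; there one must check that the solvable residual and the maximal solvable quotient behave correctly under this quotient, which they do precisely because $P\leq R=F^{(\infty)}$.
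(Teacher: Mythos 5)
Your argument is correct and follows the same route as the paper: reduce to a finite solvable section, lift it to a finite subgroup $F\leq G$, use a Frattini-type argument to produce an honest solvable \emph{subgroup} of $G$ whose derived length dominates that of the section, and then apply Fact~\ref{khukhro} to that subgroup. The only real difference is presentational: where the paper cites as ``well-known'' that a finite group $R$ has a supplement $P$ to $R\cap K$ with $P\cap K$ nilpotent (so $P$ is solvable and maps onto the section), you instead prove from scratch, by induction through Sylow normalizers of the solvable residual, that every finite group has a solvable subgroup of derived length at least that of its maximal solvable quotient --- a slightly longer but equivalent and self-contained way to reach the same conclusion.
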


\begin{proof} By Fact \ref{khukhro} we may assume that every solvable subgroup in $G$ has derived length at most $d$. Since $\overline{H}$ is locally finite, it suffices to prove that an arbitrary finite solvable subgroup $\overline{S}$ in  $\overline{H}$ has derived length at most $d$. Pick representatives $s_1,\dots s_n$ of cosets of $\overline{S}$ of $K$ and generate by them a subgroup $R$; it is a finite subgroup, and by the well-known Frattini Argument for finite groups, $R$ contains a subgroup $P$ such that $P(R\cap K) = R$ and $P\cap K$ is nilpotent. Thus, $P$ is a solvable subgroup of $G$, hence has derived length at most $d$; but $PK/K = RK/K = \overline{S}$, hence the derived length of $\overline{S}$ is also at most $d$.

\end{proof}

\subsection{Simple sections in locally finite groups of finite centralizer dimension}

The following result by Brian Hartley (based on the Classification of Finite Simple Groups) allows us to identify constrained simple locally finite groups.

\begin{fact}[Hartley \cite{Hartley1995}]\label{hartley} Let $L$ be an infinite simple locally finite group. If  some finite group is not involved in $L$, then $L$ is a Chevalley group, or a twisted version of a Chevalley group, or one of the non-algebraic twisted groups of Lie type $^2B_2$, $^2F_4$, or $^2G_2$ over a locally finite field.
\end{fact}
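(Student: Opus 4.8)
The plan is to derive the statement from the interplay between Kegel's structure theory for infinite simple locally finite groups, the Classification of Finite Simple Groups, and the classification of simple locally finite groups of finite Lie rank. First I would invoke Kegel's theorem in its sharp form: since $L$ is an infinite simple locally finite group, for every finite subgroup $H\le L$ there are a finite subgroup $G$ with $H\le G\le L$ and a normal subgroup $N$ of $G$ with $H\cap N=1$ and $G/N$ a non-abelian finite simple group. Applying this to a cofinal family of finite subgroups $H_i$ (every finite subgroup of $L$ lying in some $H_i$, and $\bigcup_i H_i=L$) yields a Kegel sequence $\{(G_i,N_i)\}$ with simple quotients $S_i:=G_i/N_i$; since $H_i\cap N_i=1$, each $H_i$ embeds in $S_i$, so $|S_i|\ge|H_i|$ and hence $|S_i|\to\infty$. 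Each $S_i$, being a section of $L$, involves no finite group that $L$ does not involve, so if the finite group $F$ is not involved in $L$ then $F$ is involved in no $S_i$.

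Next I would apply CFSG to the $S_i$ together with two elementary embedding observations: every finite group embeds into $\mathrm{Alt}(n)$ once $n$ is large (Cayley, plus appending a transposition to correct parity), and $\mathrm{Alt}(n+1)$ occurs as a section of every finite simple classical group of Lie rank at least $n$ (via the deleted permutation module $\mathrm{Sym}(n+1)\hookrightarrow\mathrm{GL}_n(q)$ and its analogues in the symplectic, orthogonal and unitary cases). Hence, if among the $S_i$ either the alternating degrees or the classical Lie ranks were unbounded, then \emph{every} finite group would be involved in $L$, contradicting the hypothesis on $F$. Since the exceptional Lie types all have rank at most $8$ and there are only finitely many sporadic groups, this gives a uniform bound: every $S_i$ of order exceeding a suitable constant $c_0=c_0(F)$ — in particular every $S_i$ with $i$ large, because $|S_i|\to\infty$ — is a finite simple group of Lie type of Lie rank at most some fixed $r$. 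Discarding the finitely many exceptions (the surviving $H_i$ still exhaust $L$), I obtain a Kegel sequence for $L$ all of whose simple quotients are of Lie type of bounded Lie rank, and a short further argument (or an appeal to the structure of linear locally finite groups) pins the defining characteristic down to a single prime $p$.

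The final step is to recognize $L$ itself from this data, and this is by far the hardest part: a simple locally finite group which is a union of finite simple groups of Lie type of characteristic $p$ and bounded Lie rank is, up to isomorphism, a (possibly twisted) Chevalley group, or one of the groups $^2B_2$, $^2F_4$, $^2G_2$, over a locally finite field of characteristic $p$. This is the classification of simple locally finite groups of finite Lie rank, obtained independently by Belyaev, Borovik, Thomas, and Hartley--Shute; it, and not the bookkeeping above, is the genuine obstacle, since it requires Sylow theory in locally finite groups, control of the abstract monomorphisms between finite groups of Lie type, and recognition theorems identifying the ambient algebraic group and enumerating its twisted forms — which is precisely why the non-algebraic families $^2B_2$, $^2F_4$, $^2G_2$ have to be listed separately in the conclusion. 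Assembling these three ingredients gives the stated result.
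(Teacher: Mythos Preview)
The paper does not prove this statement at all: it is quoted as Fact~\ref{hartley} from Hartley's survey \cite{Hartley1995} and used as a black box, with no argument supplied. There is therefore nothing in the paper to compare your proposal against.

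That said, your outline is a faithful sketch of how the result is actually obtained in the literature: Kegel covers to produce a sequence of finite simple sections $S_i$ with $|S_i|\to\infty$; CFSG plus the embedding of $\mathrm{Alt}(n)$ into large-rank classical groups to force the $S_i$ to have bounded Lie rank once some finite $F$ is excluded; a reduction to a single defining characteristic; and finally the hard classification of simple locally finite groups of finite Lie rank due independently to Belyaev, Borovik, Thomas, and Hartley--Shute. Hartley's own exposition in \cite{Hartley1995} follows essentially this route. One small imprecision: in your last paragraph you phrase the hypothesis of the recognition step as ``$L$ is a union of finite simple groups of Lie type of bounded rank'', whereas what you actually have at that stage is a Kegel sequence with such \emph{quotients}; passing from the latter to the former (i.e.\ showing the Kegel kernels eventually vanish, via linearity in bounded dimension) is itself part of the work absorbed into the cited classification, so you should either state the recognition theorem in its Kegel-cover form or make that intermediate step explicit.
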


We shall call groups in conclusion of Fact \ref{hartley} \emph{simple groups of Lie type}.

As an immediate corollary, we have the following theorem.

\begin{theorem} \label{th:simple-sections}
If  $L$ is an infinite simple section of a locally finite group of finite centralizer dimension, then $L$ is of Lie type.
\end{theorem}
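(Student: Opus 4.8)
The plan is to derive the theorem from Lemma~\ref{lm:constrained} together with Hartley's theorem (Fact~\ref{hartley}), the bridge being the stability of constrainedness under passing to sections. Write $L \cong H/K$ with $K \lhd H \le G$. First I would record the elementary observation that a section of a section of $G$ is again a section of $G$: if $\bar M/\bar N$ is a section of $L$, then by the correspondence theorem $\bar M = M/K$ and $\bar N = N/K$ for subgroups $K \le N \lhd M \le H$, so $\bar M/\bar N \cong M/N$ is a section of $H$, hence of $G$. In particular every finite group $T$ involved in $L$ is (isomorphic to) a section of $G$, and so by Lemma~\ref{lm:constrained} it is constrained; moreover, inspecting the proof of that lemma, the bound on the derived lengths of the solvable subgroups is the \emph{same} number $d = d(c)$ furnished by Fact~\ref{khukhro} for $G$ itself, independently of the section.

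Now I would argue by contradiction: suppose every finite group is involved in $L$. Choose a finite solvable group $T$ of derived length $d+1$ (say an iterated wreath product of groups of prime order). Then $T$ is involved in $L$, hence is a section of $G$, hence constrained with bound $d$ by the previous paragraph. But $T$ is a solvable subgroup of itself of derived length $d+1 > d$ --- a contradiction. Therefore some finite group is not involved in $L$.

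Since sections of locally finite groups are locally finite, $L$ is an infinite simple locally finite group in which some finite group is not involved, so Fact~\ref{hartley} applies and shows that $L$ is of Lie type, as required.

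I expect no serious obstacle here: the theorem is essentially a repackaging of the results of Khukhro and Hartley, and the only point needing genuine care is the uniformity of the constant $d$ across $G$ and all of its sections --- which is exactly what the proof of Lemma~\ref{lm:constrained} delivers --- together with the trivial but necessary remark that ``involved in'' means ``isomorphic to a section of'', so that the transitivity-of-sections step is legitimate.
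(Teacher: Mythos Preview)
Your argument is correct and is exactly the reasoning the paper has in mind: the paper states the theorem as an ``immediate corollary'' of Fact~\ref{hartley}, relying on Lemma~\ref{lm:constrained} (and its uniform bound $d=d(c)$) for the input that some finite group fails to be involved in $L$. Your explicit treatment of transitivity of sections and of the uniformity of $d$ is precisely what makes the word ``immediate'' honest, so there is nothing to add.
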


It is a partial generalisation of the result by Simon Thomas (1983)  (which is
also based on the Classification of Finite Simple Groups).

\begin{fact} {\rm (Thomas \cite{thomas21983})}
An infinite simple locally finite group which satisfies the minimal condition on centralizers is of Lie type over a locally finite field.
\end{fact}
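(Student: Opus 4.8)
The plan is to obtain the statement directly from Fact~\ref{hartley}, using Lemma~\ref{lm:constrained} to supply the hypothesis of Hartley's theorem. The first observation I would record is that ``being a section of'' is a transitive relation: if $L=H/K$ is a section of $G$ and $M$ is a section of $L$, then, pulling $M$ back along the quotient map $H\to H/K$, we see that $M$ is a section of $G$. Hence every section of $L$ is itself a section of the ambient locally finite group $G$ of finite centralizer dimension $c$.

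Next I would fix an integer $d=d(c)$ with the property that every solvable subgroup of every section of $G$ has derived length at most $d$. Such a $d$ exists: a solvable subgroup of a section of $G$ is periodic and locally solvable, hence by Khukhro's theorem (Fact~\ref{khukhro}) solvable of derived length bounded by a function of $c$, and Lemma~\ref{lm:constrained} propagates this bound from $G$ to all of its sections. Now choose a finite solvable group $S$ of derived length exactly $d+1$ — such groups exist for every $d$, for instance iterated regular wreath products of a cyclic group of order $2$. I claim that $S$ is not involved in $L$. Indeed, if $S$ were a section of $L$, then by the transitivity noted above $S$ would be a section of $G$; but $S$ is (as a subgroup of itself) a solvable subgroup of that section of derived length $d+1>d$, contradicting the choice of $d$.

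Therefore some finite group, namely $S$, fails to be involved in $L$. Since $L$ is infinite and simple by hypothesis, Fact~\ref{hartley} applies and yields that $L$ is a Chevalley group, a twisted version of a Chevalley group, or one of the non-algebraic twisted groups of Lie type $^2B_2$, $^2F_4$, $^2G_2$ over a locally finite field; that is, $L$ is of Lie type, which is exactly the conclusion sought.

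I do not expect a genuine obstacle here: the theorem is an immediate corollary once Lemma~\ref{lm:constrained} is available, and the only points requiring a line of care are the transitivity of the section relation and the existence of finite solvable groups of arbitrarily large derived length, both routine. The substantive work has already been done elsewhere — the Classification-based input is concentrated in Fact~\ref{hartley}, and the key structural move is Lemma~\ref{lm:constrained}, which (via the Frattini argument) replaces the chain condition on centralizers, not inherited by sections, with the weaker ``constrained'' property, which is.
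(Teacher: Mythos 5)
This Fact is not proved in the paper at all: it is cited verbatim as Thomas's 1983 theorem, and the paper offers no argument for it. The paper merely remarks that its own Theorem~\ref{th:simple-sections} is a ``partial generalisation'' of Thomas's result; it does not derive Thomas's result from anything.

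More importantly, your argument does not establish the stated Fact, and the obstruction is a genuine hypothesis mismatch. You invoke an ambient locally finite group $G$ of finite centralizer dimension $c$ in which $L$ sits as a section, and you take the bound $d=d(c)$ on derived lengths from that $c$ via Fact~\ref{khukhro} and Lemma~\ref{lm:constrained}. But Thomas's hypothesis is only that the simple group $L$ itself satisfies the \emph{minimal condition} on centralizers, i.e.\ every strictly descending chain of centralizers terminates. There is no ambient $G$ provided, and the minimal condition is strictly weaker than finite centralizer dimension: it forbids infinite descending chains but supplies no uniform bound on their lengths. Both Fact~\ref{khukhro} (Khukhro's bound on derived length) and Lemma~\ref{lm:constrained} explicitly require finite centralizer dimension, so neither can be applied under Thomas's weaker hypothesis. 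If one tries to set $G=L$, one would need $L$ itself to have finite centralizer dimension, which is not given. What you have written out is, in effect, the argument the paper leaves implicit for Theorem~\ref{th:simple-sections} (``an immediate corollary'' of Hartley's Fact~\ref{hartley} together with Lemma~\ref{lm:constrained}); that argument is fine for Theorem~\ref{th:simple-sections}, but it is not a proof of Thomas's theorem, whose original proof must cope with the weaker chain condition directly.
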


\subsection{Quasisimple simple locally finite groups of Lie type}

Recall that a group $H$ is called \emph{quasisimple} if $H = [H,H]$ and $H/Z(H)$ is a non-abelian simple group.

\begin{proposition} \label{prop:multiplier}
If $H$ is a quasisimple locally finite group and $H/Z(H)$ is a simple group of Lie type than $|Z(H)|$ is finite and bounded by a constant depending only on $H/Z(H)$.
\end{proposition}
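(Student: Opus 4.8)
The plan is to reduce to the classical fact that finite simple groups of Lie type have Schur multipliers of bounded order, and to use local finiteness to glue this together. Write $\bar H = H/Z(H)$, a simple group of Lie type over a locally finite field $k = \bigcup k_i$ (the union of an ascending chain of finite subfields). The group $\bar H$ is a direct limit of finite groups of the same Lie type $\bar H_i = \bar L(k_i)$ (or their twisted analogues), and correspondingly one can write $H$ as a direct limit of finite central extensions $H_i$ of the $\bar H_i$. The key input is that the Schur multiplier $M(\bar H_i)$ of a finite simple group of Lie type has order bounded by a constant depending only on the Lie rank and type, not on the field; these are the exceptional multipliers tabulated in the work of Steinberg and others (e.g.\ in Griess's or Gorenstein--Lyons--Solomon's treatment of covering groups of groups of Lie type). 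Since $\bar H$ has a fixed Lie type and rank, this gives a uniform bound $N = N(\bar H)$ on $|M(\bar H_i)|$ for all $i$.

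First I would set up the limit presentation carefully: choosing the $H_i$ so that $H_i \le H_{i+1}$ and $H = \bigcup H_i$, with each $H_i$ a perfect central extension of $\bar H_i$ by $Z_i := H_i \cap Z(H)$. Because $H$ is perfect and quasisimple, by passing to a cofinal subsequence each $H_i$ may be taken perfect (replace $H_i$ by its derived subgroup, which still surjects onto the perfect group $\bar H_i$ for $i$ large). Then each $H_i$ is a quotient of the universal central extension $\widehat{\bar H_i}$, so $|Z_i| \le |M(\bar H_i)| \le N$. Next I would show $Z(H) = \bigcup Z_i$: any central element of $H$ lies in some $H_j$ and is central there (since $H_j$ maps onto all of $\bar H$ in the limit, or directly since it centralizes a generating set). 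Finally, $|Z(H)| = |\bigcup Z_i| \le N$ because the $Z_i$ form an ascending chain of subgroups each of order at most $N$, hence the chain stabilizes and the union has order at most $N$. This yields $|Z(H)| \le N(\bar H)$, as required; and one checks $N$ depends only on $\bar H$ since it depends only on the Lie type and rank, which are determined by $\bar H$.

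The main obstacle I anticipate is the bookkeeping needed to realize $H$ genuinely as a limit of \emph{perfect} central extensions of the finite layers $\bar H_i$, compatibly with the inclusions — one must be slightly careful that the induced maps $H_i \to H_{i+1}$ interact correctly with the central subgroups and that the tabulated multiplier bound indeed covers all the finite twisted and exceptional types appearing (including $^2B_2(q)$, $^2G_2(q)$, $^2F_4(q)$ and the sporadic exceptional multipliers in small characteristic, such as those for $A_1(4)$, $A_1(9)$, $^2A_3(2)$, $B_2(2)'$, $G_2(4)$, etc.). A secondary point is to handle the case where $\bar H$ is a simple group of Lie type that is \emph{not} defined over an infinite field in the naive way (e.g.\ certain twisted forms); but for all of these the local structure is still a direct limit of the corresponding finite groups of Lie type, so the same argument applies. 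Once the limit structure is in place, the conclusion is immediate from the finite multiplier bounds.
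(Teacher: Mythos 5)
Your proposal follows the same strategy as the paper: realize $\overline H = H/Z(H)$ as an ascending union of finite simple groups of Lie type $\overline H_i$ over finite subfields, lift to an ascending chain of finite \emph{perfect} central extensions $H_i\le H$ with $H_i/(H_i\cap Z(H))\cong\overline H_i$, invoke the uniform bound on Schur multipliers of finite groups of a fixed Lie type and rank to bound $Z_i:=H_i\cap Z(H)$, and conclude.

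There is one step you assert rather than prove, and it is exactly the step the paper's proof is built around: that every $z\in Z(H)$ lies in some $H_j$ (equivalently, that the perfect $H_i$ can be chosen with $H=\bigcup H_i$). Your parenthetical justification (``since $H_j$ maps onto all of $\bar H$ in the limit, or directly since it centralizes a generating set'') addresses why such a $z$, once known to lie in $H_j$, would be central there --- which is automatic --- but not why $z$ lies in any $H_j$ at all; a priori $\bigcup H_i$ is only a perfect subgroup with $\bigl(\bigcup H_i\bigr)Z(H)=H$, and $Z(H)$ could leak outside. The paper closes this gap by using quasisimplicity directly: given $z\in Z(H)$, write $z$ as a finite product of commutators in $H=[H,H]$; the images of the entries lie in some $\overline H_m$, so one may replace each entry by a representative in $H_m$ without changing the commutators (central factors cancel), giving $z\in[H_m,H_m]=H_m$ and hence $z\in H_m\cap Z(H)=Z(H_m)$. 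With that observation inserted --- or, equivalently, with the remark that $H=[H,H]=\bigl[\bigcup H_i\cdot Z(H),\,\bigcup H_i\cdot Z(H)\bigr]=\bigcup H_i$ --- your argument is complete and coincides with the paper's.
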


\begin{proof}
The group of Lie type $\overline{H} =H/Z(H)$ is defined over a locally finite field, say $F$, and taking groups of points  in $\overline{H}$ of finite subfields of $F$, we can construct a sequence of subgroups
\[
1 < H_1 < H_2 < \dots
\]
such that their images $\overline{H}_i$ in $\overline{H}$ are simple groups of the same Lie type as  $\overline{H}$, $H_i= [H_i,H_i]$ for all $i =1,2,\dots$, and  $\overline{H} = \bigcup_{i=1}^\infty \overline{H}_i$. Let $Z=Z(H)$. The subgroups $Z(H_i) = H_i \cap Z$ are factor groups of the Schur multipliers of groups of Lie type $\overline{H}_i$ and have orders of bounded size (see \cite[\S 6.1]{GLS}). Therefore the sequence of groups
\[
1 \leq Z(H_1) \leq Z(H_2) \leq \dots
\]
stabilizes at some finite group $Z_*$ of bounded order. If $Z$ is infinite, there is an element $z \in Z \smallsetminus Z_*$ which is written as a product of some commutators from $H$: $z =[h_1,h_2]\cdots [[h_{2k-1},h_{2k}]$ with elements $h_1, h_2, \dots, h_{2k}$ contained in one of the  subgroups $H_m$; but then $z \in [H_m,H_m] = H_m$ and therefore belongs to $Z(H_m) \leq Z_*$ -- a contradiction.
\end{proof}

\subsection{Composition factors and composition series}

Now we have to address another difficulty: the concept of a composition factor is somewhat vague in the case of locally finite groups because the classical Jordan-H\"{o}lder Theorem for composition series of finite groups is no longer true; a spectacular example is given by Brian Hartley \cite[pp.~2--3]{Hartley1995}. Moreover, there are countably infinite locally finite simple groups for which the following holds: $G$ possess a  series
\[
\ldots G_{-2} \triangleleft G_{-1} \triangleleft G_{0}
\triangleleft G_{1} \triangleleft G_{2} \ldots
\]
of proper subgroups of $G$ where the factors are finite and $G =\bigcup_{i=1}^\infty G_i$ (\cite[Theorem 1.27]{Hartley1995} and \cite{Meierfrankenfeld1995}).

For that reason we formulate the theorem by Buturlakin and Vasil'ev  \cite{Buturlakin2013} in a weaker form more suitable for our use.

\begin{fact} \label{fact:Buturlakin} {\rm(Buturlakin and Vasil'ev  \cite{Buturlakin2013})}
Let $G$ be a locally finite group of centralizer dimension $c$ and
\[
1=G_0 < G_1 < G_2 < \dots < G_l = G
\]
a finite subnormal series in $G$. Then the number of distinct non-solvable factors $G_i/G_{i-1}$,  $i \geqslant 1$, is at most $5c$.
\end{fact}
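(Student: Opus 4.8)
We sketch how one might prove the last statement.

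The plan is to peel off the local finiteness, pass to the number of non-abelian composition factors, and then bound that number using the solvable radical, the Classification, and a single descending chain of centralizers.

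\emph{Reduction to the finite case.} For each index $i$ with $G_i/G_{i-1}$ non-solvable, fix a finite non-solvable subgroup $\overline{F}_i\leq G_i/G_{i-1}$, lift a finite generating set of $\overline{F}_i$ to a finite subgroup $F_i\leq G_i$, and let $F$ be the join of all these $F_i$. Since $G$ is locally finite, $F$ is finite, and its centralizer dimension is at most $c$ because centralizer dimension passes to subgroups. Intersecting the given series with $F$ produces a subnormal series $F\cap G_0\leq\cdots\leq F\cap G_l=F$ in which, for every non-solvable index $i$, the factor $(F\cap G_i)/(F\cap G_{i-1})$ is again non-solvable, since it is isomorphic to a subgroup of $G_i/G_{i-1}$ containing $\overline{F}_i$. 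Hence it suffices to prove the bound for the finite group $F$, so we may assume $G$ is finite.

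\emph{Reduction to composition factors.} Refine the given subnormal series of the finite group $G$ to a composition series. Each non-solvable factor $G_i/G_{i-1}$ contributes at least one non-abelian composition factor within its own segment of the refinement, and distinct indices occupy disjoint segments; so the number of non-solvable factors is at most the number $\lambda(G)$ of non-abelian composition factors of $G$, counted with multiplicity. It remains to bound $\lambda(G)$ by a linear function of $c$.

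\emph{Bounding $\lambda(G)$.} Let $R$ be the solvable radical of $G$. Then $\lambda(G)=\lambda(G/R)$, the group $\overline{G}=G/R$ has trivial solvable radical, and $\operatorname{Soc}(\overline{G})=\overline{S}_1\times\cdots\times\overline{S}_m$ is a direct product of non-abelian simple groups with $C_{\overline{G}}(\operatorname{Soc}(\overline{G}))=1$, so that $\lambda(G)=m+\lambda(\overline{G}/\operatorname{Soc}(\overline{G}))$. The group $\overline{G}/\operatorname{Soc}(\overline{G})$ embeds into $\operatorname{Out}(\overline{S}_1\times\cdots\times\overline{S}_m)$, which by Schreier's conjecture (a consequence of the Classification) has a solvable normal subgroup with quotient embeddable into a direct product of symmetric groups of total degree $m$; since a subgroup of $\operatorname{Sym}(d)$ has at most $d$ non-abelian composition factors, the second summand is at most $m$. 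For the first summand one aims to exhibit, \emph{inside $G$}, pairwise commuting non-abelian subgroups $T_1,\dots,T_m$ with $T_i$ surjecting onto $\overline{S}_i$ modulo $R$; then
\[
C_G(1)>C_G(T_1)>C_G(T_1T_2)>\cdots>C_G(T_1\cdots T_m)
\]
is a strictly descending chain of centralizers, because at each step $T_{j+1}\leq C_G(T_1\cdots T_j)$ is non-abelian and hence is not contained in $C_G(T_1\cdots T_{j+1})$; this forces $m\leq c$. Combining the two estimates yields $\lambda(G)\leq 2c$, with ample room for the stated bound $5c$.

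\emph{Where the difficulty lies.} The step that cannot be done naively is the last one: centralizer dimension is \emph{not} inherited by the quotient $\overline{G}$, so the centralizer chain must be constructed inside $G$ itself, whereas the socle factors $\overline{S}_i$ lift only to subgroups of $G$ that commute modulo $R$, and a perfect subgroup of $G$ may act non-trivially on an abelian section of $R$ — so replacing the lifts by their perfect parts does not restore literal commutation. Arranging the lifts so that they genuinely commute (for instance by induction on $|R|$, controlling how minimal lifts of the $\overline{S}_i$ interact with the successive chief factors inside $R$), and pinning down the resulting constant, is the technical heart of the argument.
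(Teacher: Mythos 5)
This statement is imported as a black box from Buturlakin and Vasil'ev \cite{Buturlakin2013}; the paper does not prove it, so there is no internal proof to compare your attempt against. Taken on its own terms, your reductions are sound: passing to a finite subgroup $F$ generated by finite non-solvable witnesses of each factor correctly reduces to the finite case, and the passage from non-solvable factors of a subnormal series to non-abelian composition factors (counted with multiplicity) is standard. The decomposition $\lambda(G)=m+\lambda(\overline{G}/\operatorname{Soc}\overline{G})$ with $\overline{G}=G/R(G)$ and the embedding of $\overline{G}/\operatorname{Soc}\overline{G}$ into $\operatorname{Out}(\operatorname{Soc}\overline{G})$ are also correct, although the claim that a subgroup of $\operatorname{Sym}(d)$ has at most $d$ non-abelian composition factors is asserted without justification and is itself a small theorem that needs a reference or an argument.

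The real problem is the step you flag as ``the technical heart'': producing pairwise commuting non-abelian subgroups $T_1,\dots,T_m\le G$ that surject onto the socle components $\overline{S}_i$ of $\overline{G}$. This is not a routine cleanup but precisely the content of the bound $m\le c$. The obvious candidates (perfect cores of the full preimages $L_i$ of the $\overline{S}_i$) genuinely fail to commute: take $G=V\rtimes(A_5\times A_5)$ with $V=W\otimes W$ for a faithful $A_5$-module $W$ without fixed points and the two factors acting on the two tensor slots; then $L_i^{(\infty)}=L_i$ for both $i$ and $[L_1,L_2]=V\neq 1$. (In this particular example commuting lifts do exist --- namely two complements to $V$ --- but that rescue uses splitting, which is not available in general.) Note also that the $\overline{S}_i$ are components of $G/R(G)$ but need not lift to components of $G$ at all, so the standard fact that distinct components commute cannot be invoked directly. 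Until you either prove that commuting lifts can always be arranged, or replace the centralizer chain by one that does not require literal commutation, the bound $m\le c$ is unsupported and the proposal does not establish the Fact.
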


\subsection{Proof of Theorem  \ref{th:structure-lfg}: solvable radical and the layer}

We start building the normal series
\[
1 \unlhd S \unlhd L  \unlhd  G
\]
of Theorem  \ref{th:structure-lfg}.

For a constrained group $H$, we denote by $R(H)$ the maximal locally solvable normal subgroup of $H$; note that $R(H)$ is solvable and $H/R(H)$ has no non-trivial solvable normal subgroups. We also denote by $Q(H)$ the minimal normal subgroup of $H$ with solvable factor $H/Q(H)$; since $H$ is constrained, $Q(H)$ exists and coincides with the last term of the derived series of $H$. Also, $Q(H)$ has no non-trivial solvable factor groups. We shall call $H$ \emph{truncated} if $R(H) = 1$ and $Q(H)=H$. Observe also that $R(H)$ and $Q(H)$ are characteristic subgroups of $H$.

We start with the normal series
\[
1 = G_0 \lhd G_1 =G,
\]
and refine and re-build it and get subsequent series
 \[
1 = G_0 \lhd G_1 < \dots < G_l =G,
\]
appropriately changing numeration at every step, in accordance with the following rules:

\begin{itemize}
\item For every  factor $G_i/G_{i-1}$ that is not truncated, we insert subgroups
\[
G_{i-1} \unlhd G_j \unlhd G_k \unlhd G_i,
\]
where $G_j$ in the full preimage of $R(G_i/G_{i-1})$ and $G_k$ is the full preimage of $Q(G_i/G_{i-1})$.
\item If $G_i/G_{i-1}$ and $G_j/G_{j-1}$, $i < j-1$, are two non-trivial truncated factors and there are  no truncated factors  $G_k/G_{k-1}$ for $i < k<j$, then $G_{j-1}/G_i$ is solvable and we can remove from the series its members $G_{i+1},\dots, G_{j-2}$.
\item If $G_i/G_{i-1}$ is a truncated factor and there is a normal subgroup $G_j \lhd G$ fitting into  $G_{i-1} < G_j < G_i$, we insert it in the series -- and repeat the process from the beginning.
\end{itemize}

In view of Fact~\ref{fact:Buturlakin}, the process terminates after finitely many steps, producing a finite series  \(
1 = G_0 \lhd G_1 < \dots < G_l =G
\) of normal subgroups where every factor is solvable or truncated without non-trivial proper characteristic subgroups.

 We again apply Fact~\ref{fact:Buturlakin}:

\begin{lemma} In the series above, truncated factors $G_i/G_{i-1}$  are finite direct products of isomorphic non-abelian simple groups which are either finite or of Lie type.
\end{lemma}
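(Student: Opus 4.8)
The plan is to analyse a single truncated factor $\overline{T} = G_i/G_{i-1}$, which by construction is a locally finite group with $R(\overline{T}) = 1$, $Q(\overline{T}) = \overline{T}$, and — crucially — no non-trivial proper characteristic subgroups. First I would identify the socle. Since $\overline{T}$ is a section of $G$, by Lemma~\ref{lm:constrained} it is constrained, so its locally solvable radical is trivial; a standard argument (using that a minimal normal subgroup of a locally finite group with trivial solvable radical, restricted to a suitable finite subgroup, is a direct product of finite simple groups, together with local finiteness) shows that $\mathrm{Soc}(\overline{T})$ is a direct product of non-abelian simple groups. Since $\mathrm{Soc}(\overline{T})$ is characteristic and non-trivial, the absence of proper non-trivial characteristic subgroups forces $\overline{T} = \mathrm{Soc}(\overline{T})$; in particular $\overline{T}$ is a (possibly infinite, possibly finite) direct product of non-abelian simple groups $S_\lambda$, $\lambda \in \Lambda$.

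Next I would show the number of factors is finite and they are pairwise isomorphic. For finiteness: the conjugation action of $\overline{T}$ permutes the $S_\lambda$, and the orbits give a decomposition of $\overline{T}$ into characteristic subgroups, so by the characteristic-simplicity there is a single orbit — but each $S_\lambda$ is already normalised by itself and centralised by every other factor, so the orbit structure is controlled by how $\overline{T}$ acts on the index set; since all factors in one orbit are isomorphic, it remains to bound $|\Lambda|$. Here I would invoke Fact~\ref{fact:Buturlakin} via a suitable finite subnormal series: if $|\Lambda|$ were infinite (or merely larger than $5c$ in the non-solvable case) one could build a finite subnormal series of $\overline{T}$, hence of $G$ by pulling back, with more than $5c$ non-solvable factors, contradicting centralizer dimension $c$. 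So $|\Lambda| = m$ is finite and $\overline{T} = S_1 \times \cdots \times S_m$ with all $S_j$ isomorphic non-abelian simple groups.

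Finally, the classification of the factors into ``finite or of Lie type'': each $S_j$, being a direct factor of $\overline{T}$, is itself a section of $G$, hence a simple section of a locally finite group of finite centralizer dimension. If $S_j$ is infinite, Theorem~\ref{th:simple-sections} applies verbatim and gives that $S_j$ is a simple group of Lie type over a locally finite field; if $S_j$ is finite there is nothing more to say. Since all $S_j$ are isomorphic, either all are finite or all are of the same Lie type, which is exactly the asserted conclusion.

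The main obstacle I expect is the socle step combined with the finiteness-of-$\Lambda$ step: in a general locally finite group the socle need not be a direct product of simple groups in the naive way, and one must be careful that ``no proper non-trivial characteristic subgroup'' genuinely forces $\overline{T}$ to coincide with a direct product of simple groups rather than, say, an infinite diagonal-type construction. The key technical point is that in a \emph{constrained} locally finite group with trivial solvable radical, every finite subgroup sits inside a finite subgroup whose generalised Fitting subgroup is a direct product of quasisimple and simple groups, and passing to the limit (controlled by Fact~\ref{fact:Buturlakin} so that only finitely many isomorphism types and a bounded number of non-solvable ``layers'' occur) yields the clean direct-product description; Proposition~\ref{prop:multiplier} ensures the relevant central extensions do not obstruct this. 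Once the direct-product structure with finitely many factors is in hand, the identification of the factors is immediate from Theorem~\ref{th:simple-sections}.
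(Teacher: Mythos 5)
Your outline matches the paper's at a high level --- decompose $\overline{T}$ into a finite direct product of minimal normal subgroups, bound their number via Fact~\ref{fact:Buturlakin}, and classify the infinite ones via Theorem~\ref{th:simple-sections}. But the step you yourself flag as ``the main obstacle'' (producing the direct-product-of-simples decomposition at all) is genuinely left open in your write-up, and the route you gesture at is not the one the paper takes.

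The difficulty is that in an infinite locally finite group the socle may be trivial: minimal normal subgroups need not exist, so ``identify the socle'' is not a free move. The paper manufactures minimal normal subgroups explicitly. It takes a normal series of $H=G_i/G_{i-1}$ with the \emph{maximal} number of non-solvable factors (finite by Fact~\ref{fact:Buturlakin}); the term $H_{k+1}$ below the last non-solvable factor is solvable, hence trivial since $R(H)=1$. For any $1\ne K\lhd H$ with $K\le H_k$, the quotient $H_k/K$ must be solvable, or one could refine and beat maximality; replacing $K$ by $Q(H_k)$ gives a truncated $K$ with no proper nontrivial subgroup normal in $H$, i.e.\ a minimal normal subgroup. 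Once minimal normal subgroups exist, the product of all of them is characteristic, hence equals $H$; the chain $K_1\lhd K_1K_2\lhd\cdots$ is finite by Fact~\ref{fact:Buturlakin}, so there are finitely many $K_j$ and $H=K_1\times\cdots\times K_m$; simplicity of each $K_j$ is then automatic because a normal subgroup of a direct factor is normal in the whole product. Your alternative sketch via generalized Fitting subgroups of finite subgroups and ``passing to the limit'' might be made to work, but it is not carried through, and Proposition~\ref{prop:multiplier} is not actually needed here.

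One smaller slip: you say the conjugation action of $\overline{T}$ permutes the $S_\lambda$ and the orbits give characteristic subgroups. But $\overline{T}$ acts trivially on its own set of direct factors (each $S_\lambda$ is normal, the others centralize it), and orbits under inner automorphisms would in any case only give normal, not characteristic, subgroups. The correct statement, which the paper uses implicitly, is that $\operatorname{Aut}(\overline{T})$ permutes the minimal normal subgroups, and characteristic simplicity forces a single orbit, hence all factors isomorphic.
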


\begin{proof} Consider normal series in $H =  G_i/G_{i-1}$;  the number of non-solvable factors is each series is bounded by  Fact~\ref{fact:Buturlakin}. Consider a normal series
 \[
 H = H_1 \rhd H_2 \rhd H_3 \rhd \cdots \rhd H_j \rhd \cdots
 \]
with maximal possible number of nonsolvable factors.   If $H_k/H_{k+1}$ is the last non-solvable factor then $H_{k+1}$ is solvable, hence $H_{k+1} =1$ since $H$ is truncated. Moreover, if  $H_k \rhd K \ne 1$ and $H \rhd K$, then $H_k/K$ is solvable -- for otherwise
the refinement
 \[
 H = H_1 \rhd H_2 \rhd H_3 \rhd \cdots \rhd H_j \rhd K \rhd 1
 \]
 would contain more non-solvable factors than the originally chosen series. Without loss of generality we can replace $K$ by the last term of the derived series of $H_k$. Now $K$ is truncated and the same argument as the one applied to $H_k$ shows that $K$ contains no nontrivial proper subgroups normal in $H$, that is, $K$ is a minimal normal subgroup in $H$.

Let now $K_1, K_2, \dots$ be distinct minimal normal subgroups in $H$ and $K = \prod_j K_j$ their product. Obviously, $K$ is a characteristic subgroup of $H$ and therefore $K=H$. Since all $K_j$ are minimal normal and non-abelian, any product $K_1K_2\cdots K_l$ of finitely many of them is a direct product, $K_1K_2\cdots K_l = K_1 \times K_2 \times \cdots \times K_l$. Obviously, $K_1 \lhd K_1K_2 \lhd K_1K_2K_3 \lhd \dots \lhd H$; this normal series in $H$ extends to a subnormal series in $G$ with non-solvable factors, and, in view of Fact~\ref{fact:Buturlakin}, is finite. Therefore $K$ is a direct product $H = \bigoplus_j K_j$ of finitely many subgroups $K_j$;   since $H$ is characteristically simple, all $K_j$s are isomorphic. If $K_j$ is infinite, than it is of Lie type by Theorem \ref{th:simple-sections}.
\end{proof}

Returning to the proof of Theorem \ref{th:structure-lfg}, we can now  define $S = R(G)$ and coherently define $L$ as the full pre-image in $G$ of the \emph{layer} $L(G/S)$, that is, the product of all simple subnormal subgroups (\emph{components}) of $G/S$. Therefore $\overline{L} =L/S$ is a direct product $\overline{L} = \overline{L}_1 \times \cdots \times \overline{L}_m$ of finitely many non-abelian simple groups. This proves Theorem \ref{th:structure-lfg} as soon as we do the point (d) of the Theorem~\ref{th:structure-lfg}, finiteness of $G/L$ -- see the following Sections.

\subsection{Action of $G$ on $G/S$}

We retain notation from the previous Section. Set $\overline{G} = G/S$.

Our first observation is that $C_{\overline{G}}(\overline{L}) \cap \overline{L} = 1$ and  $C_{\overline{G}}(\overline{L}) \unlhd \overline{G}$. If $C_{\overline{G}}(\overline{L}) \ne 1$, then, applying analysis of the previous Section to the full preimage of $C_{\overline{G}}(\overline{L})$ in $G$,  we see that  the group $C_{\overline{G}}(\overline{L})$ has subnormal non-abelian simple subgroups which are subnormal in $\overline{G}$ but do not belong to  $\overline{L}$, which contradicts to the way $\overline{L}$ was constructed. Hence $C_{\overline{G}}(\overline{L}) = 1$.

Now the group $G$, in its action on $\overline{L}$ by conjugation, permutes simple subgroups $\overline{L}_i$; the kernel of this permutation action, say $G^\circ$, is a normal subgroup of finite index in $G$. Without loss of generality, we can assume that $G^\circ = G$ and each $\overline{L}_i \unlhd \overline{G}$.

Let now $\overline{M}=\overline{L}_1 \times \cdots\times \overline{L}_k$ be the product of all \emph{infinite} components of $\overline{G}$; if $\overline{M}=1$, then $\overline{L}$ and hence $\overline{G}$ are finite, thus the point (d) holds and Theorem~\ref{th:structure-lfg} is proven.

So we can assume that $\overline{M} \ne 1$. Denote by $\overline{N} = \overline{L}_{k+1} \times \cdots\times \overline{L}_m$ the product of all finite components of $\overline{G}$. If  $\overline{N} \ne 1$, then $C_{\overline{G}}(\overline{N})$ is the kernel of the action of $\overline{G}$  on $\overline{N}$ by conjugation and therefore has finite index in $\overline{G}$. Again, we can assume without loss of generality that $C_{\overline{G}}(\overline{N})= \overline{G}$ and  $\overline{N} = 1$, that is, all components of $\overline{G}$ are infinite simple groups of Lie type over (infinite) locally finite fields.

\subsection{The factor group $G/L$ is abelian-by-finite} \label{sec:ab-by-fi}
We turn our attention to the action of $\overline{G}$ on $\overline{L}$.

It is well-known that every automorphism of a group of Lie type over a locally finite field $F$, say $X=X(F)$, is a product of inner, diagonal, graph, and field automorphisms. If $\rm{Out}\, X = {\rm Aut}\, X/ \rm{Inn}\, X$ is the group  of outer automorphisms of $X$, then images in $\rm{Out}\, X$ of diagonal and graph automorphisms of $X$ generate a finite subgroup. Also, the image $\Gamma$ in $\rm{Out}\, X$ of the group of field automorphisms is naturally isomorphic to the group ${\rm Aut}\, F$. It is well known that  ${\rm Aut}\, F$ is a factor group of $\widehat{\mathbb{Z}}$,  the profinite completion of the additive group of integers (the latter is the Galois group of the algebraic closure of a finite prime field).

We have a natural embedding
\[
\overline{G}/\overline{L} \hookrightarrow \prod_{i=1}^m {\rm Out}\, \overline{L}_i.
\]
We see now that if $\overline{G}/\overline{L}$ is infinite, it contains an abelian subgroup $\Delta$ of finite index which either centralizes, or acts by field automorphisms on components $\overline{L}_i$ -- in the sense that elements from the preimage $\overline{D}$ of $\Delta$ induce on $\overline{L}_i$ automorphisms that are products of inner and field automorphisms.

Let $D$ be the full preimage of $\overline{D}$  in $G$, then $D$ has a finite index in $G$. So for the rest of the proof we can assume, without loss of generality, that $G/L$ is abelian and outer automorphism induced from $\overline{D}$ on Lie type subgroups $\overline{L}_i$ are field automorphisms.

\subsection{Frattini Argument}

At this point it becomes essential to find a more efficient way around the fact that the descending chain condition for centralizers in general is not preserved under taking factor groups.

The general situation is the following: we have a group $G$ of finite centralizer dimension and a subgroup $K \lhd G$; we wish to derive some information about $\widehat{G} = G/K$ without being able to prove directly that $\widehat{G}$ has finite centralizer dimension.
The idea is to calculate instead in an appropriate
partial complement $M \leq G$ to $K$, that is, a subgroup such that $G = MK$ and $M$ is sufficiently small for easier deduction of the desired facts about $G/K \simeq M/(M\cap K)$.

The classical way to construct partial complements is the Frattini Argument. For a set of prime numbers $\pi$, a periodic group $H$ is called a $\pi$-group (correspondingly, $\pi'$-group), if prime factors of orders of elements from $H$ belong (correspondingly, do not belong) to $\pi$. A Sylow $\pi$-subgroup of a group $H$ is a maximal $\pi$-subgroup of $H$.

\begin{lemma} \label{Lemma:FrattinI-Argument} {\rm (Frattini Argument)}
Let $H$ be a locally finite group and $K \lhd H$ a normal subgroup. Assume that the Sylow $\pi$-subgroups in $K$ are conjugate in $K$ and $P$ is one of them. Then $H = K N_H(P)$.
\end{lemma}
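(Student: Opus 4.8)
The plan is to run the classical finite-group Frattini argument essentially verbatim; the only point demanding care is that here a Sylow $\pi$-subgroup means a \emph{maximal} $\pi$-subgroup rather than one of prescribed order, so I must first check that this property is preserved under conjugation inside $K$.

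\textbf{Step 1.} Fix an arbitrary $h \in H$ and consider the conjugate $P^h$. Since $K \lhd H$, conjugation by $h$ restricts to an automorphism $\varphi$ of $K$. If $\varphi(P)$ were contained in a strictly larger $\pi$-subgroup $R$ of $K$, then $P$ would be contained in the $\pi$-subgroup $\varphi^{-1}(R) > P$, contradicting the maximality of $P$; hence $P^h = \varphi(P)$ is again a Sylow $\pi$-subgroup of $K$.

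\textbf{Step 2.} By hypothesis the Sylow $\pi$-subgroups of $K$ are conjugate in $K$, so there is $k \in K$ with $P^h = P^k$. Then $P^{hk^{-1}} = P$, i.e. $hk^{-1} \in N_H(P)$, whence
\[
h = (hk^{-1})\, k \in N_H(P)\, K = K\, N_H(P),
\]
the last equality holding because $K \lhd H$. As $h$ was arbitrary, $H \subseteq K N_H(P)$; the reverse inclusion is trivial, so $H = K N_H(P)$.

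I do not anticipate a genuine obstacle. Local finiteness is not really used beyond making the notions of $\pi$-subgroup and Sylow $\pi$-subgroup meaningful, and the substantive input --- conjugacy of Sylow $\pi$-subgroups inside $K$ --- has been assumed outright (this is, of course, exactly the point at which, in applications, one will have to invoke a suitable Sylow theory for the relevant class of locally finite groups). The only thing one might stumble over is conflating ``Sylow'' in the order-theoretic sense with ``maximal $\pi$-subgroup''; once Step~1 records that conjugation preserves maximality, the remaining argument is the textbook one.
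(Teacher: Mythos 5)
Your proof is correct and is exactly the standard Frattini argument that the paper invokes by simply remarking that the proof is ``the same as in the finite case.'' Step~1, noting that conjugation preserves maximality of $\pi$-subgroups, is the only point where the locally finite/Sylow-as-maximal setting might cause hesitation, and you handle it cleanly.
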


\begin{proof} Proof is exactly the same as the well-known proof in the finite case.
\end{proof}

The value of the Frattini Argument is obvious because of following important fact.

\begin{fact} {\rm (Bryant and Hartley \cite[Theorem 1.6]{Bryant-Hartley-1979})}
In a periodic solvable group $H$ with descending chain condition for centralizers and for all set of primes $\pi$, Sylow $\pi$-subgroups are conjugate.
\end{fact}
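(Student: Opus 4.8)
The plan is to imitate the classical proof that $\pi$-separable finite groups have conjugate Hall $\pi$-subgroups, but to arrange matters so that whenever that proof would pass to a factor group $H/N$ (where the chain condition on centralizers, call it Min-c, could be lost) we pass instead to a Frattini normaliser $N_H(P^{\ast})$, which is a \emph{subgroup} of $H$ and therefore still satisfies Min-c. The Frattini Argument, Lemma~\ref{Lemma:FrattinI-Argument}, is exactly the tool that makes this substitution possible.

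Two reductions set the stage. First, $H$ is locally finite (local finiteness is closed under extensions and periodic abelian groups are locally finite, so one inducts on the derived length), and, refining the finite derived series and splitting each periodic abelian factor $A$ through its characteristic decomposition $A = A_{\pi}\times A_{\pi'}$, one obtains a finite \emph{characteristic} series
\[
1 = H_0 \unlhd H_1 \unlhd \cdots \unlhd H_r = H
\]
each of whose factors is an abelian $\pi$-group or an abelian $\pi'$-group. Second, one uses the standard facts about Sylow subgroups in periodic groups: for $N\unlhd H$ and $P$ a Sylow $\pi$-subgroup of $H$, $PN/N$ is a Sylow $\pi$-subgroup of $H/N$ and $P\cap N$ is one of $N$. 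Running an induction down the series above: given Sylow $\pi$-subgroups $P_1,P_2$ of $H$ and putting $N = H_{r-1}$ (which, as a subgroup, again satisfies Min-c), one may, after conjugating $P_1$ inside $N$, assume $P_1\cap N = P_2\cap N =: P^{\ast}$; then Lemma~\ref{Lemma:FrattinI-Argument} gives $H = N\,N_H(P^{\ast})$ with $P_1,P_2\leq N_H(P^{\ast})$, and the problem moves into the subgroup $N_H(P^{\ast})$. Tracking which abelian $\pi$- or $\pi'$-groups the relevant factors can be, one sees that everything is either trivial or reduces to a single residual assertion: \emph{if $N\unlhd H$ is a $\pi'$-subgroup with $H/N$ a $\pi$-group, then the complements to $N$ in $H$ are conjugate}.

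This residual assertion is proved by induction on the derived length of the $\pi'$-subgroup $N$. The base case is $N$ abelian, $H = N\rtimes P$ with $P$ a $\pi$-group, where conjugacy of complements comes down to the vanishing of $H^{1}(P,N)$. For an arbitrary locally finite $\pi$-group $P$ acting on an abelian $\pi'$-group this can fail: writing $P=\bigcup_i P_i$ with $P_i$ finite, the obstruction is captured by ${\varprojlim}^{1} C_N(P_i)$, which need not vanish. This is the one irreducible use of Min-c: the groups $C_N(P_i) = N\cap C_H(P_i)$ form a descending chain of centralizers \emph{inside the subgroup} $H$, hence stabilise, the inverse system is cofinally constant and so Mittag--Leffler, its ${\varprojlim}^{1}$ is zero, $H^{1}(P,N) = 0$, and the complements are conjugate. (Equivalently, a back-and-forth argument along the local system closes up precisely because those centralizer chains stabilise.)

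The step I expect to be the main obstacle is the inductive step of this last reduction. To go from $N$ abelian to $N$ of larger derived length the natural move is to factor out $N'$ and apply the abelian case to $H/N'$ --- but $H/N'$ is a factor group, so Min-c, and with it the stabilisation of the relevant centralizer chain, may be lost, and the Frattini trick no longer rescues the situation. Circumventing this is the genuine content of \cite{Bryant-Hartley-1979}: one first establishes the structure of periodic soluble groups with Min-c --- they are, up to subgroups of finite index and abelian layers, tame, having in particular a nilpotent normal subgroup (which, being periodic nilpotent, is the direct product of its primary components) modulo which the remaining structure is tightly controlled --- and only then does the Hall-style conjugacy argument run. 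So the hard part is not the Frattini bookkeeping sketched above but this structural analysis, which turns the non-inheritance of Min-c by quotients from a fatal defect into a manageable one.
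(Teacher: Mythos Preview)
The paper does not prove this statement: it is recorded as a \emph{Fact} with a bare citation to Bryant--Hartley \cite[Theorem~1.6]{Bryant-Hartley-1979} and no accompanying argument, so there is no ``paper's own proof'' to compare against. Your proposal is therefore not a reconstruction of anything in the paper but an independent attempt at the cited theorem.

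As for the proposal itself, your own diagnosis is accurate: the Frattini bookkeeping and the abelian base case (vanishing of $H^1$ via stabilisation of the chain $C_N(P_i)$) are plausible, but the inductive step from abelian $N$ to $N$ of higher derived length is where the argument as written stops being a proof. You pass to $H/N'$, lose Min-c, and then appeal to ``the structural analysis in \cite{Bryant-Hartley-1979}'' --- which is precisely the theorem you are trying to prove. So the proposal is a correct identification of where the difficulty lies, together with a reduction to that difficulty, rather than a proof; and since the paper under review does exactly the same thing (cite and move on), that is entirely in keeping with its treatment of this Fact.

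One small wrinkle in your reduction: when you write ``the problem moves into the subgroup $N_H(P^\ast)$'' you then need the characteristic series of $N_H(P^\ast)$ to be shorter, or some other well-founded measure to decrease, for the outer induction to terminate; as stated, $N_H(P^\ast)$ is again a periodic soluble group with Min-c and the same series length, and $P^\ast$ is now normal in it, so you must next factor out $P^\ast$ --- a quotient step --- to make genuine progress. This is another instance of the same obstruction you flagged at the end.
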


We shall start now using the Frattini argument as a tool for carving out from $G$ subgroups where we have better control of centralizers.

\subsection{Toward finiteness of $G/L$} \label{sec:towards-finiteness}
Let us now look at the group $\Delta < \overline{G}/\overline{L}$ constructed in Section \ref{sec:ab-by-fi} and its full preimage $\overline{D}$ in $\overline{G}$.

To prove that $\overline{G}/\overline{L}$ is finite it will suffice to prove that $\Delta$ is finite. So let us assume that $\Delta$ is infinite and work towards a contradiction. Among components $\overline{L}_i \lhd \overline{L}$, we pick one, say $\overline{K}$, such that $\overline{D}$ induces in its action on $\overline{K}$ an infinite group of outer automorphisms. The group  $\overline{K}$ is of Lie type over a locally finite field $F$; let $p$ be the of characteristic of $F$. Consider the natural homomorphism
\[
\rho: \overline{D} \longrightarrow \overline{K} \rtimes \Gamma, \quad \mbox{ where } \quad \Gamma = {\rm Aut}\, F,
\]
and take ${\rm E} = {\rm Im}\,\rho \cap \Gamma$.

 The group $\Gamma$  is the continuous image of $\widehat{\mathbb{Z}}$,
and ${\rm E}$, as a locally finite subgroup of $\Gamma$, is locally cyclic and  is a direct sum of finite cyclic
groups of pairwise coprime prime power orders; let $\epsilon_1 ,\epsilon_2,\dots  $, be generators of the cyclic direct
summands of ${\rm E}$, then we have an infinite decreasing sequence of fields
\[
C_F(\langle\epsilon_1\rangle) > C_F(\langle\epsilon_1,\epsilon_2\rangle) > C_F(\langle\epsilon_1,\epsilon_2,\epsilon_3\rangle) > \dots,
\]
and correspondingly an infinite descending chain of centralizers
\[
C_{\overline{K}}(\langle\epsilon_1\rangle) > C_{\overline{K}}(\langle\epsilon_1,\epsilon_2\rangle) > C_{\overline{K}}(\langle\epsilon_1,\epsilon_2,\epsilon_3\rangle) > \dots.
\]
This would produce a contradiction if we knew that $\overline{G}$ had a descending chain condition for centralizes; but we don't, so we have to make some further surgery on the group $G$, and, in particular, cut ${\rm E}$ to a manageable size.

Pick in ${\rm E}$ elements $\alpha_1, \dots, \alpha_n$, of pairwise different prime orders $p_1,\dots, p_n$, none of which is $p$, and none divides the order of the center of any quasisimple extension  of $\overline{K}$ (see Proposition~\ref{prop:multiplier}), making sure that $n$ is bigger than the centralizer dimension $c$ of $G$. Take their product $\alpha = \alpha_1\alpha_2\cdots\alpha_n$ and its preimage $\bar{a}$ in $\overline{G}$; let $a \in G$ be some coset representative of $\bar{a}$ and $A = \langle a \rangle$ is the cyclic group generated by $a$; replacing $a$ by another coset representative, we can ensure that prime divisors of $|A|$ are exactly $p_1,\dots, p_n$.

Let now $K$ be the full preimage of $\overline{K}$; we can replace, without loss of generality, $G$ by $KA$; the solvable radical $S$ could slightly grow up by absorbing part of $A$, but this does not affect our considerations; we still have the property that, for appropriate powers $a_1,\dots, a_n$ of $a$, we have a descending chain of centralizers
\[
C_{\overline{K}}(\langle a_1\rangle) > C_{\overline{K}}(\langle a_1,a_2\rangle) > \dots >  C_{\overline{K}}(\langle a_1,a_2,\dots, a_n\rangle).
\]

\subsection{Trimming the solvable radical}

Let $U$ be a Sylow $p$-subgroup of $S$, where $p$ is the characteristic of $F$, the underlying field of $\overline{K}$. Using Frattini Argument, we replace $G$ by $N_G(U)$ and assume, without loss of generality, that $U \lhd G$. Now we take a Sylow $p'$-subgroup $Q$ in $S$ and, applying the Frattini Argument again, replace $G$ by $N_G(Q)$, thus assuming, without loss of generality, that $Q\lhd G$. Now $S = U \times Q$.

Let now $P$ be a Sylow $p$-subgroup in $K$, then its image $\overline{P}$ in $\overline{K}$ is a maximal unipotent subgroup in the group $\overline{K}$ of Lie type over an infinite field of characteristic $p$; hence $\overline{P}$ contains an infinite elementary abelian subgroup $\overline{V}$ (the center of any root subgroup can be used for that purpose). We denote by $V$ a Sylow $p$-subgroup in the full preimage of $\overline{V}$ in $K$, then $V$ acts on $Q$ by conjugation and $U$ belongs to the kernel of this action. The group $VQ$ has finite centralizer dimension, and therefore has uniformly bounded lengths of  chains of centralizers of subsets from $\overline{V}$ in $Q$. At this point we can invoke the following  fact.

\begin{fact}  If an elementary abelian $p$-group $W$ of order~$p^n$ acts faithfully on a locally finite solvable $p'$-group $Q$, then there exists a series of subgroups $$W=W_0>W_1>W_2>\dots>W_n=1$$ such that $$C_Q(W_0)<C_Q(W_1)<\dots<C_Q(W_n).$$
\end{fact}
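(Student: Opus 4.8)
The plan is to argue by induction on $n$, where $p^n=|W|$, after reducing everything to one claim: \emph{if a non-cyclic elementary abelian $p$-group $W$ acts faithfully on a locally finite solvable $p'$-group $Q$, then some subgroup $W_1<W$ of index $p$ satisfies $C_Q(W)\subsetneq C_Q(W_1)$.} Granting this, the induction is immediate. For $n=1$ the group $W$ has order $p$ and acts non-trivially, being faithful, so $C_Q(W)\subsetneq Q=C_Q(1)$ and one takes $W_0=W$, $W_1=1$. For $n\ge 2$ I would choose, by the claim, a subgroup $W_1<W$ of index $p$ with $C_Q(W)\subsetneq C_Q(W_1)$; the restriction of a faithful action is faithful, so $W_1$ (of order $p^{n-1}$) acts faithfully on $Q$, and the inductive hypothesis supplies a chain $W_1=V_1>V_2>\cdots>V_n=1$ with $C_Q(V_1)\subsetneq C_Q(V_2)\subsetneq\cdots\subsetneq C_Q(V_n)=Q$. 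Putting $W_0=W$ and $W_i=V_i$ for $i\ge 1$ then gives the required series, since $C_Q(W_0)=C_Q(W)\subsetneq C_Q(V_1)=C_Q(W_1)$.

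To prove the claim I would first observe that $C_Q(W)\subseteq C_Q(W_1)$ for every index-$p$ subgroup $W_1$, while $C_Q(W)\subsetneq Q$ because $W$ acts faithfully, hence non-trivially. So it is enough to show that $Q$ is generated by the subgroups $C_Q(W_1)$ as $W_1$ ranges over the index-$p$ subgroups of $W$: if each of them equalled $C_Q(W)$ we would get $Q=C_Q(W)$, contradicting the previous line. This generation statement I would deduce from the finite case. Given $g\in Q$, take a finitely generated --- hence finite --- $W$-invariant subgroup $Q_g\le Q$ that contains $g$ and on which $W$ still acts faithfully; such $Q_g$ exists because $W$ is finite (adjoin to $g$ the $W$-orbits of finitely many elements, one witnessing that each non-identity $w\in W$ acts non-trivially). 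Since $W$ is non-cyclic (we are in the case $n\ge 2$), $|W|$ is coprime to $|Q_g|$, and $Q_g$ is finite and solvable, the classical generation theorem for coprime action --- a non-cyclic abelian group $A$ acting coprimely on a finite solvable group $G$ satisfies $G=\langle C_G(B):B<A,\ A/B\text{ cyclic}\rangle$ --- yields $Q_g=\langle C_{Q_g}(W_1):[W:W_1]=p\rangle$; in particular $g$ lies in the subgroup of $Q$ generated by the $C_Q(W_1)$. As $g$ was arbitrary, $Q=\langle C_Q(W_1):[W:W_1]=p\rangle$, as needed.

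I expect the only non-formal ingredient to be this classical generation theorem, and it is also where the hypotheses genuinely matter: non-cyclicity is indispensable (for a cyclic group the conclusion can fail already at the top step: e.g.\ $\mathbb{Z}/p^2$ acting on a $p'$-group so that its subgroup of order $p$ is fixed-point-free), and here it is supplied automatically by $W$ having rank $n\ge 2$; coprimality $(|W|,|Q|)=1$ holds since $W$ is a $p$-group and $Q$ a $p'$-group; and solvability of $Q$ is assumed outright. Everything else is routine bookkeeping: passing from the locally finite $Q$ to finite $W$-invariant subgroups, and concatenating the chains up the induction --- the only point to watch being that faithfulness is inherited at each stage of the recursion, which it is, since restricting a faithful action to a subgroup keeps it faithful.
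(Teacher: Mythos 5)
Your proof is correct, but it takes a genuinely different route from the paper. The paper's proof first uses the same finiteness trick as you do (enlarge a finitely generated $W$-invariant subgroup of $Q$ by orbits of witnesses to faithfulness, then work inside that finite subgroup), but then invokes the Hartley--Turull theorem to replace the finite solvable $Q$ by an \emph{abelian} group $R$ on which each subgroup of $W$ has the same number of fixed points, and finally quotes Khukhro's Lemma~3, which proves the chain statement directly when $Q$ is finite nilpotent. You instead argue by induction on $n$, using the classical coprime-action generation theorem (a non-cyclic abelian group $A$ acting coprimely on a finite solvable group $G$ satisfies $G=\langle C_G(B):B<A,\ A/B\text{ cyclic}\rangle$) to extract one index-$p$ subgroup $W_1$ with $C_Q(W)\subsetneq C_Q(W_1)$, and then recurse; along the way you correctly observe that the restriction of a faithful action to a subgroup is faithful, that $W_1$ is non-cyclic once $n\geq 3$, and you handle the base case $n=1$ directly. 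What each approach buys: the paper's version is shorter to state because it delegates the heart of the argument to the already-cited Khukhro lemma (at the cost of importing the rather non-trivial Hartley--Turull fixed-point replacement), whereas yours is more self-contained and in effect reproves Khukhro's lemma for elementary abelian $W$ from a single classical generation theorem, without needing to pass through an abelian surrogate for $Q$. Both approaches rest on standard coprime-action machinery of comparable depth, and both correctly make the same reduction from locally finite to finite.
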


\begin{proof}
This fact is proven by Khukhro \cite[Lemma 3]{khukhro_2009} in the special case of $Q$ being finite and nilpotent, and, in particular, when $Q$ is abelian. Reduction of the more general case to this one is easy. First of all, $W$ acts on $Q$ faithfully, therefore for every element $w \in W \smallsetminus \{1\}$ we can pick an element $g_w \in Q$ such that $g^w_w \ne g^w$ and replace $Q$ by a $W$-invariant finite subgroup $\langle g_w^W : w \in W \smallsetminus \{1\}\rangle$ generated by the $W$-orbits $q_w^W$ of elements $g^w$. So we can assume without loss of generality that $Q$ is finite. Now we can apply a much more general  result:

\begin{fact}
 {\rm (Hartley-Turull, \cite[Theorem 3.31]{Isaacs2008})}. Let $W$ act via automorphisms on $Q$, where
$W$ and $Q$ are finite groups, and suppose that $(|W|, |Q|) = 1$. Assume also that at least one of $W$ or $Q$ is solvable. Then $W$ acts via automorphisms on some
abelian group $R$ in such a way that every subgroup $U \subseteq W$ has equal
numbers of fixed points on $Q$ and on $R$.
\end{fact}
Now an application of Khukhro's lemma \cite[Lemma 3]{khukhro_2009} to  the action of $W$ on $R$ completes the proof.
\end{proof}

We can now return to proof of Theorem  \ref{th:structure-lfg}. We now see that $Y=C_V(Q)$ has finite index in $V$ and therefore the image $\overline{Y}$ of $Y$ in $\overline{K}$ is infinite. Let $H=C_K(Q)$; obviously, $H \unlhd K$ and contains  $\overline{Y}$;  but $\overline{K}$ is simple, hence $\overline{K}=\overline{H}$  and contains  $\overline{Y}$. So, without loss of generality, we can replace $K$ by $H=C_K(Q)$, and then replace $G$ by  $KA$.

Now $Q \leq Z(K)$. At the last step of our trimming procedure, we replace $K$ by the intersection of its derived series, making sure that $K= [K,K]$, and then again replace $G$ by $KA$.

Consider $\widehat{K} = K/U$ and let $\widehat{Q}$ be the image of $Q$ in $\widehat{K}$, then  $\widehat{Q} \leq  Z(\widehat{K})$. The centers of locally finite quasisimple groups of Lie type are finite by Proposition \ref{prop:multiplier}, hence $\widehat{Q}$ and $Q$ are finite.
In the resulting normal series
\[
1 \unlhd U \unlhd S \lhd K \unlhd G
\]
the factor $S/U$ is a finite abelian group in view of Proposition \ref{prop:multiplier}. In particular, only finitely many distinct prime numbers divide orders of elements in $S$.

\subsection{End of proof}
Now we can return to the chain of centralizers
\[
C_{\overline{K}}(\langle a_1\rangle) > C_{\overline{K}}(\langle a_1,a_2\rangle) > \dots > C_{\overline{K}}(\langle a_1,a_2,\dots, a_n\rangle)
\]
constructed in Section \ref{sec:towards-finiteness}.

Recall that $A$ was constructed in a way that $a_i \in A$ have orders coprime to orders of elements in $U$ and in $S/U$. These restrictions have been forced on $A$ with the aim of lifting centralizers of subgroups $B \leq A$ in $\overline{K}$ to centralizers in $K$, that is, proving that $C_{\overline{K}}(B)$ is the image of $C_K(B)$ in $\overline{K}$. For that, we need a simple tool from finite group theory.

\begin{fact} \label{fact-lifting-centralisers}
Let $B$ be a finite cyclic $\pi$-group of automorphisms of a finite group $H$ and  and let $R$  an $B$-invariant normal $\pi'$-subgroup of $H$. Then $C_{H/R}(B)$ is the image of $C_H(B)$ in $H/R$.
\end{fact}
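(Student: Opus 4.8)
The plan is to reduce everything to the conjugacy part of the Schur--Zassenhaus theorem, applied inside a semidirect product. One inclusion is trivial: if $h \in C_H(B)$ then $h^\beta = h$ for all $\beta \in B$, so the image $\bar h$ of $h$ in $H/R$ is fixed by $B$, i.e. $\bar h \in C_{H/R}(B)$. The content is the reverse inclusion $C_{H/R}(B) \subseteq C_H(B)R/R$, and that is what the argument below establishes.

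First I would form $\Gamma = H \rtimes B$, so that $R$ is a normal $\pi'$-subgroup of $\Gamma$ (it is normal in $H$ and $B$-invariant) and $B \cap R = 1$. Fix $\bar h = hR \in C_{H/R}(B)$. Because $B$ centralises $\bar h$, one has $[\langle h\rangle, B] \leq R$; hence $\langle h\rangle R$ is $B$-invariant, $\Gamma_1 := \langle h\rangle R\,B$ is a subgroup of $\Gamma$, and $RB$ is normal in $\Gamma_1$ (the same commutator bound shows that $\langle h\rangle$ normalises $RB$, since $h^{-1}(RB)h = RB^h$ and $B^h \le RB$). Now $B$ and its conjugate $B^h$ are two complements to the normal $\pi'$-subgroup $R$ in $RB$: indeed $B^h \leq RB$ since $[\langle h\rangle,B] \leq R$, while $B^h \cap R = (B\cap R)^h = 1$ and $|B^h| = |B| = |RB/R|$.

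Then I would apply the conjugacy part of Schur--Zassenhaus, legitimate here because $R$ is a normal Hall $\pi'$-subgroup of $RB$ and $B$, being cyclic, is solvable: there is $m \in R$ with $B^h = B^m$, so $h' := hm^{-1}$ normalises $B$. Since $m \in R$ we have $h'R = hR = \bar h$, so $h'$ represents the given coset. Finally the commutator identity closes the argument: $[h',\beta] \in B$ for every $\beta \in B$ because $h'$ normalises $B$, and $[h',\beta] \in R$ because $B$ centralises $\bar h' = \bar h$; hence $[h',B] \leq B \cap R = 1$, so $h' \in C_H(B)$ and therefore $\bar h = h'R \in C_H(B)R/R$, as required.

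The only substantial input is the Schur--Zassenhaus conjugacy step, and the point to watch is simply that it genuinely applies: coprimality $(|B|,|R|)=1$ holds because $B$ is a $\pi$-group and $R$ a $\pi'$-group, and solvability of one of the two factors is free since $B$ is cyclic. Everything else is bookkeeping with commutators in $\Gamma$; the one small trick is the last line, where $[h',B] \le B \cap R = 1$ upgrades ``$h'$ normalises $B$'' to ``$h'$ centralises $B$''.
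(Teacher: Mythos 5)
Your argument is correct: forming $\Gamma = H \rtimes B$, observing that $[\langle h\rangle,B]\le R$ puts $B$ and $B^h$ as two complements to the normal Hall $\pi'$-subgroup $R$ in $RB$, conjugating them by Schur--Zassenhaus (applicable because $B$ is cyclic hence solvable and $(|B|,|R|)=1$), and finally using $[h',B]\le B\cap R = 1$ to upgrade ``normalises'' to ``centralises'' --- every step checks out, and the reduction to the coprime pair $(B,R)$ rather than $(B,H)$ is exactly what the statement needs. The paper itself does not give a proof: it simply quotes the statement as a special case of \cite[Corollary 3.28]{Isaacs2008}. So your write-up is not ``the same approach'' in a literal sense, but it is essentially the standard argument underlying the result cited there (Isaacs's proof of that corollary, and its relatives, run through the same Schur--Zassenhaus/Glauberman conjugacy step inside a semidirect product). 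What you gain by spelling it out is a self-contained, elementary derivation that makes clear exactly which coprimality and solvability hypotheses are being used --- in particular, that only $(|B|,|R|)=1$ and cyclicity of $B$ are required, not coprimality of $|B|$ with $|H|$; what the paper gains by citing is brevity. Either is acceptable here.
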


\begin{proof} This is a special case of \cite[Corollary 3.28]{Isaacs2008} \end{proof}

Now we can expand, in a routine way, Fact \ref{fact-lifting-centralisers} to locally finite groups.

\begin{lemma}
Let $B$ be a finite cyclic $\pi$-group of automorphisms of a locally finite group $K$ and  let $S$  an $B$-invariant normal solvable $\pi'$-subgroup of $H$. Assume, in addition, that orders of elements from $S$ are divisible only by finitely many different prime numbers.  Then $C_{K/S}(B)$ is the image of $C_K(B)$ in $K/P$.
\end{lemma}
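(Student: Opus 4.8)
The plan is to reduce the statement to its finite counterpart, Fact~\ref{fact-lifting-centralisers}, by the familiar device of replacing $K$ by a suitable finite $B$-invariant subgroup. (I read the statement with the evident minor corrections: $S\lhd K$, and the quotient in the conclusion is $K/S$.) One inclusion is immediate: if $g\in C_K(B)$ then $gS$ is a $B$-fixed element of $K/S$. So the task is to show that each $B$-fixed coset in $K/S$ has a $B$-fixed coset representative in $K$.

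First I would fix a $B$-fixed coset $gS\in C_{K/S}(B)$ together with a representative $g\in K$; since $K$ is locally finite, $g$ has finite order. For every $b\in B$ one has $\overline{g^b}=\overline{g}$ in $K/S$, so $g^{-1}g^b\in S$. I would then set $G_1=\langle g^b:b\in B\rangle$: this is generated by finitely many elements of the locally finite group $K$, hence is finite, and it is $B$-invariant because $B$ permutes its generators. Next put $R_1=G_1\cap S$; it is a normal subgroup of $G_1$ (since $S\lhd K$), it is $B$-invariant, it is finite, and it is a $\pi'$-group because it sits inside $S$. The key bookkeeping observation is that $g^{-1}g^b$ lies in $G_1$ (it is a product of generators of $G_1$) as well as in $S$, hence in $R_1$; consequently $gR_1$ is a $B$-fixed element of the finite quotient $G_1/R_1$.

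Finally I would apply Fact~\ref{fact-lifting-centralisers} to the finite group $G_1$, its $B$-invariant normal $\pi'$-subgroup $R_1$, and the finite cyclic $\pi$-group $B$ acting on $G_1$ by restriction: the $B$-fixed coset $gR_1$ then possesses a representative $g'\in C_{G_1}(B)$. Since $g'\in C_K(B)$ and $g'\in gR_1\subseteq gS$, we obtain $g'S=gS$, which yields the missing inclusion. The argument is genuinely routine; the only point that needs attention is precisely the step guaranteeing that $B$-invariance of $gS$ in $K/S$ passes to $B$-invariance of $gR_1$ in $G_1/R_1$, which is exactly why $G_1$ must be built from the whole $B$-orbit of $g$ and not from $g$ alone. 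I also note that the hypotheses that $S$ be solvable and that only finitely many distinct primes divide the orders of its elements are not in fact used in this reduction: all that is required is that $S$ be a $\pi'$-group, so that the coprimality needed by Fact~\ref{fact-lifting-centralisers} holds automatically for the pair $(R_1,B)$.
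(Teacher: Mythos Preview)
Your argument is correct and is exactly the routine reduction to the finite case (Fact~\ref{fact-lifting-centralisers}) that the paper invokes without writing out; no separate proof appears in the paper. Your remark that the solvability of $S$ and the finiteness of its prime spectrum play no role---only the $\pi'$-condition on $S$, together with the cyclicity (hence solvability) of $B$, is needed---is also correct.
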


Now, after lifting centralizers in the chain
\[
C_{\overline{K}}(\langle a_1\rangle) > C_{\overline{K}}(\langle a_1,a_2\rangle) > \dots > C_{\overline{K}}(\langle a_1,a_2,\dots, a_n\rangle),
\]
from $\overline{K}$ to $K$, we have the chain of centralizers
\[
C_{{K}}(\langle a_1\rangle) > C_{{K}}(\langle a_1,a_2\rangle) > \dots > C_{{K}}(\langle a_1,a_2,\dots, a_n\rangle),
\]
of the length exceeding the centralizer dimension of $G$.

This contradiction completes the proof of Theorem~\ref{th:structure-lfg}.\hfill $\square$ $\square$

\section*{Acknowledgements}

We thank the anonymous referee for useful comments and suggestions.

\end{document}